\newtheorem{theorem}{Theorem}[section]
\newtheorem{corollary}[theorem]{Corollary}   
\newtheorem{definition}[theorem]{Definition}
\newtheorem{example}[theorem]{Example}
\newtheorem{remark}[theorem]{Remark}
\numberwithin{equation}{section}
\begin{document}

\title{Cohen-Macaulay weighted oriented chordal and simplicial graphs}
\author{
Kamalesh Saha
}
\date{}

\address{\small \rm Chennai Mathematical Institute, Siruseri, Chennai, Tamil Nadu 603103, India}
\email{ksaha@cmi.ac.in}



\date{}

\subjclass[2020]{Primary 13F20, 13H10, 05C22, 05E40}

\keywords{Cohen-Macaulay rings, edge ideals of weighted oriented graphs, chordal graphs, simplicial graphs}

\allowdisplaybreaks

\begin{abstract}
Herzog, Hibi, and Zheng classified the Cohen-Macaulay edge ideals of chordal graphs. In this paper, we classify Cohen-Macaulay edge ideals of (vertex) weighted oriented chordal and simplicial graphs, a more general class of monomial ideals. In particular, we show that the Cohen-Macaulay property of these ideals is equivalent to the unmixed one and hence, independent of the underlying field.
\end{abstract}

\maketitle

\section{Introduction}

The study of combinatorial commutative algebra began with the pioneering work of R. P. Stanley \cite{stanley75} and G. Reisner \cite{reisner}
in 1975. The study of square-free monomial ideals grabbed the attention of researchers when it was seen in terms of simplicial complexes. Among the sub-classes of square-free monomial ideals, edge ideals of simple graphs, introduced by Villarreal in \cite{vil90}, stand out with their own identity.
\par

The importance of the Cohen-Macaulay property in combinatorics comes through the proof of ``upper bound conjecture". Since Cohen-Macaulay simplicial complexes play an essential role in the study of algebraic geometry, algebraic topology, and combinatorics, finding good classes of Cohen-Macaulay monomial ideals is always in high demand. Because, via the polarization technique, corresponding to any Cohen-Macaulay monomial ideal, we get a Cohen-Macaulay simplicial complex. Corresponding to a simplicial complex $\Delta$, we can associate a graph $G$ such that $I(G)$ is Cohen-Macaulay if and only if $\Delta$ is Cohen-Macaulay. Therefore, the problem of classifying Cohen-Macaulay edge ideals of graphs is as difficult as classifying all Cohen-Macaulay simplicial complexes. There is evidence of edge ideals whose Cohen-Macaulay property depends on the base field. So, one can not expect the general classification, and people are interested in finding those classes of graphs whose Cohen-Macaulay property is independent of the underlying field. In this direction, some remarkable works are as follows: the classification of all Cohen-Macaulay trees by Villarreal \cite{vil01}; all Cohen-Macaulay bipartite graphs by Herzog \& Hibi \cite{hh05}; all Cohen-Macaulay chordal graphs by Herzog, Hibi \& Zheng \cite{cmchordal}.  
\par

In the recent past, the notion of edge ideals of vertex-weighted directed graphs has been defined in \cite{prt19}, which is a generalization of the edge ideals of simple graphs.
\par

A \textit{weighted oriented graph} $D_{G}$, with an underlying simple graph $G$, is a directed graph on the vertex set $V(D_G):=V(G)$ with a weight function $w:V(D_{G})\longrightarrow \mathbb{N}$, where $\mathbb{N}$ denotes the set of positive integers. An edge of $D_{G}$ is denoted by the ordered pair $(x_{i},x_{j})$, which means the direction of the edge is from $x_{i}$ to $x_{j}$.

\begin{definition}{\rm 
Let $D_{G}$ be a weighted oriented graph with $V(D_{G})=\{x_{1},\ldots,x_{n}\}$. Then the \textit{edge ideal} of $D_{G}$, denoted by $I(D_{G})$, is defined as
$$I(D_{G}):=\big<\{x_{i}x_{j}^{w(x_j)}\mid (x_{i},x_{j})\in E(D_{G})\}\big>$$
in the polynomial ring $R=K[x_{1},\ldots,x_{n}]$ over a field $K$. When $w(x_i)=1$ for all $i$, $I(D_G)$ coincide with the usual edge ideal $I(G)$ of $G$. By saying $D_{G}$ or $I(D_{G})$ is Cohen-Macaulay, we mean the quotient ring $R/I(D_{G})$ is Cohen-Macaulay.
}
\end{definition}

The motivation behind studying weighted oriented edge ideals has its foundation in coding theory. Specifically, they appear as the initial ideals of certain vanishing ideals $I(\mathcal{X})$ of some sets of projective points $\mathcal{X}$ in the study of Reed-Muller-type codes (see \cite{cnl17}, \cite{mpv17}). One can estimate some basic parameters and properties of Reed-Muller-type codes associated with $\mathcal{X}$ by studying weighted oriented edge ideals. For example, if $I(D_{G})$ is Cohen-Macaulay, then $I(\mathcal{X})$ is Cohen-Macaulay.\par

Like the classification problem of Cohen-Macaulay simple graphs, people are interested in finding suitable classes of Cohen-Macaulay weighted oriented graphs (independent of the base field). For certain classes of weighted oriented edge ideals, Cohen-Macaulay ones are characterized combinatorially: (i) path and complete graphs \cite{prt19}; (ii) forests \cite{gmsvv18}; (iii) bipartite graphs and graphs with perfect matching \cite{hlmrv19}; (iv) {K}\H{o}nig graphs \cite{prv21}; (v) cycles \cite{ki_weight}. 
\par

Chordal graphs play a significant role in the theory of edge ideals of simple graphs. This class is reflected in the celebrated Fr\"{o}berg's theorem \cite{frob90}, which states that $I(G)$ has a linear resolution if and only if $G^c$ (complement of $G$) is chordal. Again, Francisco and Van Tuyl showed that if $G$ is a chordal graph, then $R/I(G)$ is sequentially Cohen-Macaulay \cite{fv07}. Herzog et al. gave the combinatorial characterization of Cohen-Macaulay chordal graphs in \cite{cmchordal}. For chordal graphs, they proved that the Cohen-Macaulay property of $I(G)$ is equivalent to the unmixed one and, thus, independent of the field $K$. 
\par

In this article, we characterize all Cohen-Macaulay chordal and simplicial weighted oriented graphs. In \cite{cpr22}, the unmixed chordal and simplicial weighted oriented graphs were classified. We show in Theorem \ref{thmchordalcm} that if the underlying graph $G$ of a weighted oriented graph $D_{G}$ is chordal or simplicial, then $I(D_G)$ is Cohen-Macaulay if and only if $I(D_G)$ is unmixed. This ensures that the Cohen-Macaulay property of edge ideals of weighted oriented chordal and simplicial graphs is independent of the base field. Our result generalizes the theorem of Herzog, Hibi \& Zheng \cite{cmchordal} and includes a larger class of monomial ideals. Also, as a corollary, we identify all Gorenstein edge ideals of weighted oriented chordal and simplicial graphs.
\section{Preliminaries}\label{preli}

In this section, we recall some definitions and results related to our work. By a simple graph, we mean an undirected graph without multiple edges or loops.\par

Let $G$ be a simple graph. A \textit{vertex cover} $C$ of $G$ is a subset of $V(G)$ such that $e\cap C\neq \emptyset$ for all $e\in E(G)$. A \textit{minimal vertex cover} of $G$ is a vertex cover $C$ of $G$ such that if $C'\subsetneq C$, then $C'$ can not be a vertex cover of $G$. Let $D_{G}$ be a weighted oriented graph. For a vertex $v\in V(G)$, we call $\mathcal{N}_{G}(v):=\{u\in V(G)\mid \{u,v\}\in E(G)\}$ the \textit{neighbour set} of $v$ in $G$. We write $\mathcal{N}_{G}[v]:=\mathcal{N}_{G}(v)\cup \{v\}$. A graph $G$ is said to be \textit{complete} if there is an edge between every pair of vertices. For $A\subseteq V(G)$, we denote the induced subgraph of $G$ on the vertex set $A$ by $G[A]$.

\begin{definition}{\rm
A \textit{cycle} of length $n$ is a connected graph on $n$ vertices having exactly $n$ edges. A graph $G$ is called a \textit{chordal graph} if $G$ has no induced cycle of length $>3$.
}
\end{definition}

\begin{definition}{\rm
A vertex $v\in V(G)$ is said to be a \textit{simplicial vertex} if the induced subgraph of $G$ on $\mathcal{N}_{G}[v]$, i.e. $G[\mathcal{N}_G[v]]$ is a complete graph and $G[\mathcal{N}_G[v]]$ is called a \textit{simplex} of $G$. A graph $G$ is said to be a \textit{simplicial graph} if every vertex of $G$ is a simplicial vertex of $G$ or is adjacent to a simplicial vertex of $G$.
}
\end{definition}

\begin{definition}[\cite{prt19}]{\rm
Let $D_{G}$ be a weighted oriented graph. Corresponding to a vertex cover $C$ of $G$, consider the following sets
\begin{align*}
& \mathcal{L}_{1}(C)=\{x\in C \mid \exists\, (x,y)\in E(D_{G})\,\, \text{such that}\,\, y\not\in C\},\\
& \mathcal{L}_{2}(C)=\{x\in C\mid x\not\in\mathcal{L}_{1}(C)\,\,\text{and}\,\, \exists\, (y,x)\in E(D_G)\,\,\text{such that}\,\, y\not\in C\},\\
& \mathcal{L}_{3}(C)=C\setminus (\mathcal{L}_{1}(C)\cup \mathcal{L}_{2}(C))=\{x\in C\mid \mathcal{N}_{G}(x)\subseteq C\}.
\end{align*}
A vertex cover $C$ of $G$ is called a \textit{strong vertex cover} of $D_{G}$ if $C$ is either a minimal vertex cover of $G$ or for all $x\in \mathcal{L}_{3}(C)$, there is an edge $(y, x)\in E(D_{G})$ with $y\in \mathcal{L}_{2}(C)\cup \mathcal{L}_{3}(C)$ and $w(y)\neq 1$.
}
\end{definition}

\begin{remark}\label{remmvc}{\rm
Let $D_G$ be a weighted oriented graph. Then radical of $I(D_G)$ is $I(G)$. Thus, the minimal primes of $I(D_G)$ are the associated primes of $I(G)$, and they are precisely the ideals generated by the minimal vertex covers of $G$.
}
\end{remark}

For a vertex cover $C$ of $D_G$, consider the following irreducible ideal associated to $C$ 
$$Q_{C}:=\big< \mathcal{L}_{1}(C)\cup\{x_{j}^{w(x_j)}\mid x_{j}\in \mathcal{L}_{2}(C)\cup \mathcal{L}_{3}(C)\}\big>.$$

\begin{theorem}[{\cite[Theorem 25]{prt19}}]\label{prmdc}
Let $\mathcal{C}_{s}$ denote the set of all strong vertex covers of $D_{G}$. Then the irredundant irreducible primary decomposition of 
$I(D_{G})$ is given by 
$$I(D_{G}) = \bigcap_{C\in\mathcal{C}_{s}} Q_{C}.$$
Moreover, $\{P_C\mid P_C=\big<C\big>,\,\,\text{where}\,\, C\in\mathcal{C}_{s}\}$ is the set of associated primes of $I(D_G)$.
\end{theorem}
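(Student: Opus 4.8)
The plan is to treat $I(D_G)$ purely as a monomial ideal and to construct its unique irredundant irreducible decomposition by hand, then read off the associated primes as the radicals of the pieces. First note that each $Q_C$ is an irreducible monomial ideal, being generated by pure powers of distinct variables, and that $\sqrt{Q_C}=\langle C\rangle=P_C$, so $Q_C$ is $P_C$-primary. Thus it suffices to (i) prove the displayed equality, (ii) show that the intersection over $\mathcal C_s$ is irredundant, and (iii) conclude $\mathrm{Ass}(R/I(D_G))=\{P_C : C\in\mathcal C_s\}$ from the resulting primary decomposition. I would first decompose over \emph{all} vertex covers, and only afterwards cut the family down to the strong ones.

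For the containment $I(D_G)\subseteq\bigcap_{C}Q_C$ over every vertex cover $C$, I would check a single generator $x_ix_j^{w(x_j)}$ with $(x_i,x_j)\in E(D_G)$: since $C$ covers $\{x_i,x_j\}$, either $x_j\in C$ (so $x_j\in Q_C$ or $x_j^{w(x_j)}\in Q_C$ according as $x_j\in\mathcal L_1(C)$ or $x_j\in\mathcal L_2(C)\cup\mathcal L_3(C)$), or $x_j\notin C$, which forces $x_i\in C$ with an out-edge to $x_j\notin C$, hence $x_i\in\mathcal L_1(C)$; in all cases the generator lies in $Q_C$. For the reverse containment I argue by contrapositive: given a monomial $m=\prod_i x_i^{a_i}\notin I(D_G)$, set $C_m:=\{x_i : a_i<w(x_i)\}$. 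The hypothesis $m\notin I(D_G)$ says that for every edge $(x_i,x_j)$ one has $a_i=0$ or $a_j<w(x_j)$, from which $C_m$ is a vertex cover and, moreover, every $x\in\mathcal L_1(C_m)$ satisfies $a_x=0$; hence no generator of $Q_{C_m}$ divides $m$, i.e. $m\notin Q_{C_m}$.

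Next I would cut the family down to strong covers. If $C$ is not strong it is not minimal and there is $x\in\mathcal L_3(C)$ all of whose in-edges $(y,x)$ have $y\in\mathcal L_1(C)$ or $w(y)=1$. Then $\tilde C:=C\setminus\{x\}$ is again a vertex cover, and tracking how the partition $\mathcal L_1,\mathcal L_2,\mathcal L_3$ changes shows $Q_{\tilde C}\subseteq Q_C$: the only new degree-one generators of $Q_{\tilde C}$ come from former in-neighbours $y$ of $x$ that now point outside $\tilde C$, and for exactly these $y$ the non-strong condition guarantees $x_y\in Q_C$; every other generator of $Q_{\tilde C}$ already appears in $Q_C$, while the lost generator $x^{w(x)}$ only enlarges $Q_{\tilde C}$. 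Because $Q_{\tilde C}\subseteq Q_C$, if $m\notin Q_C$ then $m\notin Q_{\tilde C}$, so starting from $C_m$ and repeatedly deleting such bad vertices terminates at a \emph{strong} cover $C^\ast$ with $m\notin Q_{C^\ast}$. This upgrades the reverse containment to $\bigcap_{C\in\mathcal C_s}Q_C\subseteq I(D_G)$ and, combined with the first step, gives the asserted equality.

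The crux is irredundancy, and the strong-cover condition enters decisively here. Fix a strong cover $C$ and consider the witness $m_C:=\prod_{x_j\in\mathcal L_2(C)\cup\mathcal L_3(C)}x_j^{w(x_j)-1}\cdot\prod_{x_k\notin C}x_k^{W}$ with $W\geq\max_i w(x_i)$; by construction $m_C\notin Q_C$. Suppose a strong cover $C'$ also satisfied $m_C\notin Q_{C'}$. Comparing exponents in $m_C$ forces $\mathcal L_2(C')\cup\mathcal L_3(C')\subseteq C$ together with $\mathcal L_1(C')\subseteq\mathcal L_1(C)\cup\{x_j\in\mathcal L_2(C)\cup\mathcal L_3(C) : w(x_j)=1\}$, so that $C'\subseteq C$. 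If $C'\subsetneq C$, choose $z\in C\setminus C'$; any edge joining $z$ to a vertex outside $C$ would be left uncovered by $C'$, so $z\in\mathcal L_3(C)$, and strongness of $C$ yields an in-edge $(y,z)$ with $y\in\mathcal L_2(C)\cup\mathcal L_3(C)$ and $w(y)\neq1$. But the edge $(y,z)$ with $z\notin C'$ forces $y\in C'$ with an out-edge to a vertex outside $C'$, i.e. $y\in\mathcal L_1(C')$, contradicting the constraint on $\mathcal L_1(C')$ above. Hence $C'=C$: the monomial $m_C$ lies in $Q_{C'}$ for every strong $C'\neq C$ but not in $Q_C$, so no component may be dropped. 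By uniqueness of the irredundant irreducible decomposition of a monomial ideal this is that decomposition; as the radicals $\langle C\rangle$ are pairwise distinct for distinct $C$, it is simultaneously the irredundant primary decomposition, giving $\mathrm{Ass}(R/I(D_G))=\{P_C : C\in\mathcal C_s\}$. I expect the exponent bookkeeping and this final $\mathcal L_3$-forcing step to be the delicate part.
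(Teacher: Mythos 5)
This theorem appears in the paper only as a quoted result from \cite[Theorem 25]{prt19}, with no proof given, so there is no internal argument to compare against; judged on its own, your proof is correct and complete, and it follows essentially the strategy of the original source: decompose over all vertex covers, prune to strong covers, then certify irredundancy with a witness monomial. The key verifications all check out. Your $C_m=\{x_i \mid a_i<w(x_i)\}$ is a vertex cover whenever $m\notin I(D_G)$ (if $a_j\geq w(x_j)$ at the head of an edge, non-divisibility forces $a_i=0<w(x_i)$ at the tail), and every $x\in\mathcal{L}_1(C_m)$ has exponent zero in $m$, so $m\notin Q_{C_m}$. The pruning inclusion $Q_{\tilde C}\subseteq Q_C$ is right: deleting $x$ can only move vertices toward $\mathcal{L}_1$, so every pure-power generator of $Q_{\tilde C}$ is already one of $Q_C$, and the only new linear generators are in-neighbours $y$ of the deleted $x\in\mathcal{L}_3(C)$, for which the failure of strongness at $x$ puts the variable $y$ in $Q_C$, either because $y\in\mathcal{L}_1(C)$ or because $w(y)=1$ gives $y^{w(y)}=y$. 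In the irredundancy step, your exponent comparison does force $\mathcal{L}_2(C')\cup\mathcal{L}_3(C')\subseteq C$ and the stated constraint on $\mathcal{L}_1(C')$, hence $C'\subseteq C$; and note that the existence of the proper subcover $C'\subsetneq C$ itself certifies that $C$ is non-minimal, so the second disjunct in the definition of strong cover genuinely applies to $z\in\mathcal{L}_3(C)$ and supplies the in-edge $(y,z)$ that yields the contradiction---a one-line remark you may want to add, since strongness is defined as a disjunction.

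Two cosmetic slips to fix: the phrase ``the lost generator $x^{w(x)}$ only enlarges $Q_{\tilde C}$'' has the direction backwards (dropping that generator can only make $Q_{\tilde C}$ smaller than $Q_C$, which is exactly what the inclusion needs), and ``$x_y\in Q_C$'' should read ``the variable $y$ lies in $Q_C$''. The final bookkeeping is sound: each $Q_C$ is irreducible with radical $\langle C\rangle$, the radicals are pairwise distinct, so your irredundant irreducible decomposition is simultaneously a minimal primary decomposition, and the first uniqueness theorem gives $\mathrm{Ass}(R/I(D_G))=\{P_C \mid C\in\mathcal{C}_s\}$ as claimed.
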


\begin{theorem}[{\cite[Theorem 31]{prt19}}]\label{unm}
$I(D_{G})$ is unmixed if and only if $I(G)$ is unmixed and $\mathcal{L}_{3}(C)=\emptyset$ for any strong vertex cover $C$ of $D_{G}$.
\end{theorem}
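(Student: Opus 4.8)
My plan is to reduce the algebraic statement of unmixedness to a purely combinatorial comparison of cardinalities of vertex covers. First I would invoke Theorem \ref{prmdc}: the associated primes of $I(D_G)$ are exactly the ideals $P_C=\langle C\rangle$ with $C$ ranging over the set $\mathcal{C}_s$ of strong vertex covers, and since $P_C$ is generated by the $|C|$ variables indexed by $C$ we have $\operatorname{ht}(P_C)=|C|$. Hence $I(D_G)$ is unmixed if and only if every strong vertex cover has the same cardinality. In parallel, by Remark \ref{remmvc} the associated primes of $I(G)$ correspond to the minimal vertex covers of $G$, so $I(G)$ is unmixed if and only if every minimal vertex cover has the same cardinality. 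The whole theorem then becomes a comparison between the family of all strong vertex covers and the subfamily of minimal ones.

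The key step I would isolate first is the combinatorial observation that a vertex cover $C$ of $G$ is minimal if and only if $\mathcal{L}_3(C)=\emptyset$. This follows directly from the definition $\mathcal{L}_3(C)=\{x\in C\mid \mathcal{N}_G(x)\subseteq C\}$: a vertex $x\in C$ may be deleted while retaining a vertex cover precisely when each edge incident to $x$ is covered by its other endpoint, that is, when $\mathcal{N}_G(x)\subseteq C$; so $C$ is minimal exactly when no such deletable vertex exists, i.e. when $\mathcal{L}_3(C)=\emptyset$. I would also record the definitional fact that every minimal vertex cover is a strong vertex cover.

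With this in hand, both implications are short. For the forward direction, assuming $I(D_G)$ is unmixed, all strong vertex covers share a common cardinality; since minimal vertex covers are strong, $I(G)$ is unmixed. If some strong cover $C$ had $\mathcal{L}_3(C)\neq\emptyset$, then $C$ would be non-minimal, hence would properly contain a minimal (thus strong) cover $C'$ with $|C'|<|C|$, contradicting unmixedness; so $\mathcal{L}_3(C)=\emptyset$ for every strong cover. Conversely, assuming $I(G)$ is unmixed and $\mathcal{L}_3(C)=\emptyset$ for all strong covers, the observation forces every strong cover to be minimal, so the strong covers are exactly the minimal covers, which have a common cardinality; hence $I(D_G)$ is unmixed.

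I expect the only real content to lie in the combinatorial observation of the second paragraph, together with the cardinality comparison that a non-minimal cover strictly contains a minimal one of smaller size; the remaining steps are bookkeeping over the primary decomposition of Theorem \ref{prmdc}. There is no field-dependence or homological input needed, which is consistent with the purely combinatorial flavor of the statement.
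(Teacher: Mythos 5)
Your proof is correct. Note that the paper states this result without proof, quoting it from \cite[Theorem 31]{prt19}; your argument---reducing unmixedness to equal cardinality of all strong vertex covers via the primary decomposition of Theorem \ref{prmdc}, and combining this with the observations that $\mathcal{L}_{3}(C)=\emptyset$ holds exactly when the cover $C$ is minimal and that every minimal vertex cover is strong---is essentially the standard proof given in that reference, so there is no gap and nothing genuinely different to compare.
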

\medskip

\section{The Cohen-Macaulay Classification}\label{secclassification}

In this section, we will show that the Cohen-Macaulay property of weighted oriented chordal and simplicial graphs is equivalent to the unmixed property. For these classes of graphs, we also characterize the Gorenstein ones.

\begin{theorem}\label{thmchordalcm}
Let $D_{G}$ be a weighted oriented graph such that $G$ is chordal or simplicial. Then $I(D_G)$ is Cohen-Macaulay if and only if $I(D_G)$ is unmixed.
\end{theorem}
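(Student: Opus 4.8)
The plan is to prove the nontrivial direction—Cohen-Macaulay implies unmixed is automatic (Cohen-Macaulay ideals are always unmixed)—so the real content is showing that unmixedness forces Cohen-Macaulayness for this class. My strategy would be to reduce the weighted oriented question to the known classical result of Herzog, Hibi \& Zheng on Cohen-Macaulay chordal graphs by exploiting the combinatorial structure that unmixedness imposes. The key tool is Theorem~\ref{unm}: assuming $I(D_G)$ is unmixed, we know $I(G)$ is unmixed and $\mathcal{L}_3(C)=\emptyset$ for every strong vertex cover $C$ of $D_G$. The first step is to understand what the primary decomposition in Theorem~\ref{prmdc} looks like under this hypothesis: since $\mathcal{L}_3(C)=\emptyset$, each irreducible component $Q_C$ is generated by $\mathcal{L}_1(C)$ together with pure powers $x_j^{w(x_j)}$ for $x_j\in\mathcal{L}_2(C)$, and every strong vertex cover is forced to be a minimal vertex cover of $G$. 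Simultaneously, unmixedness of $I(G)$ for a chordal (or simplicial) graph lets me invoke the classical classification: all such $G$ have a very rigid structure, namely $G$ is a disjoint-simplex-like graph where the minimal vertex covers are controlled.

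\emph{Second}, I would try to produce an explicit Cohen-Macaulay witness via polarization. The natural approach is to polarize $I(D_G)$ to obtain a squarefree monomial ideal $I(D_G)^{\mathrm{pol}}$ in a larger polynomial ring and show it is the edge ideal of some simple graph (or Stanley-Reisner ideal of an explicitly describable complex) whose Cohen-Macaulayness is governed by the Herzog--Hibi--Zheng criterion. Polarization preserves Cohen-Macaulayness, so it suffices to verify the polarized object is Cohen-Macaulay. Under the hypothesis $\mathcal{L}_3(C)=\emptyset$, the weights attached to vertices split cleanly into ``weight-one'' vertices (behaving like ordinary graph edges) and higher-weight vertices (which after polarization become paths/simplices attached to the simplicial structure). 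I expect that the polarized complex is a cone or a sequence of cones over the classical Cohen-Macaulay complex of $G$, and coning preserves Cohen-Macaulayness.

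\emph{Third}, I would handle the chordal and simplicial cases in tandem: since every simplicial graph is built from simplexes glued along simplicial vertices and chordal graphs admit perfect elimination orderings, I would induct on the number of vertices (or on the number of simplexes), peeling off a simplicial vertex $v$ and its simplex. The inductive step compares $I(D_G)$ with the edge ideal of the weighted oriented graph obtained by deleting $v$ and its simplex, using a short exact sequence (a mapping-cone or a quotient $R/I(D_G)$ with $R/(I(D_G):\text{monomial})$ and $R/(I(D_G)+\text{monomial})$) to propagate Cohen-Macaulayness. The depth/Krull-dimension bookkeeping is made possible because unmixedness guarantees all minimal primes have the same height, so I only need to exhibit a single regular sequence of the right length or verify the depth formula.

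\emph{The hard part} will be controlling the interaction between the orientation/weights and the simplicial decomposition in the inductive step—specifically, ensuring that deleting a simplicial vertex $v$ does not create a new strong vertex cover $C'$ in the smaller graph with $\mathcal{L}_3(C')\neq\emptyset$, which would break the inductive hypothesis of unmixedness. I anticipate needing a careful lemma showing that the strong-vertex-cover structure and the condition $\mathcal{L}_3=\emptyset$ are inherited by the relevant induced subgraphs, together with a precise description (from \cite{cpr22}) of exactly which weight functions $w$ on a fixed unmixed chordal $G$ yield unmixed $I(D_G)$. Once that combinatorial stability is established, the homological step reduces to the classical field-independent Cohen-Macaulayness of $G$, giving the result and its field-independence as an immediate consequence.
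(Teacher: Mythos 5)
Your reduction of the problem is sound as far as it goes: Cohen--Macaulay implies unmixed is standard, unmixedness gives $\mathcal{L}_3(C)=\emptyset$ and the unmixedness of $I(G)$ via Theorem~\ref{unm}, and under unmixedness every strong vertex cover is indeed a minimal vertex cover (all associated primes have the height of the ideal, hence are minimal). But from there the proposal is a pair of sketched strategies, neither of which is carried out, and each has a concrete obstruction. The polarization route breaks at its central step: polarizing $x_ix_j^{w(x_j)}$ yields the squarefree monomial $x_i x_{j,1}\cdots x_{j,w(x_j)}$ of degree $w(x_j)+1$, so whenever some weight exceeds $1$ the polarized ideal is \emph{not} the edge ideal of any simple graph, and the Herzog--Hibi--Zheng criterion does not apply to it. Your fallback claim that the polarized complex is ``a cone or a sequence of cones'' over the complex of $G$ also fails as stated: a cone vertex must appear in no generator of the Stanley--Reisner ideal, while the new polarization variables $x_{j,2},\ldots,x_{j,w(x_j)}$ do appear in generators. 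The inductive route fares no better: you yourself flag the missing lemma (stability of the unmixed/strong-cover structure under deletion of a simplicial vertex and its simplex) as the hard part, and without it the induction has no base to stand on; moreover the mapping-cone depth bookkeeping for $R/(I:u)$ and $R/(I+\langle u\rangle)$ requires depth bounds on both pieces that are nowhere supplied. A further small inaccuracy: an unmixed chordal graph is not a ``disjoint-simplex-like graph''---by the cited structure theorem the vertex sets of the simplices merely \emph{partition} $V(G)$, while edges between different simplices are allowed (see Figure~\ref{fig1}).

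For contrast, the paper's proof needs neither polarization nor induction on vertices. It takes the simplices $H_1,\ldots,H_m$ furnished by the partition and exhibits the explicit linear forms $h_i=\sum_{x\in V(H_i)}x$ as a regular sequence on $R/I(D_G)$. The key step is showing that every associated prime of $\langle I(D_G),h_1,\ldots,h_{i-1}\rangle$ has the form $P_C+\langle\bigcup_{j<i}V(H_j)\rangle$ with $C$ a strong (hence minimal) vertex cover: since each $H_j$ is a simplex, $P_C$ already contains all but one vertex of $V(H_j)$, so $h_j\in P$ forces $V(H_j)\subseteq P$, and any strictly larger prime would produce an embedded prime of the unmixed ideal $I(D_G)$, a contradiction. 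Then $h_i\notin P$ because $C$ is minimal and $V(H_i)$ is disjoint from the earlier simplices, giving $\operatorname{depth}(R/I(D_G))\geq m=\dim(R/I(D_G))$. If you want to salvage your proposal, this regular-sequence argument is the idea you are missing: unmixedness is used directly to kill embedded primes in the quotients, rather than indirectly through a structural classification of which weights yield unmixed $I(D_G)$.
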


\begin{proof}
The ``only if" part of the theorem is well-known. Let $I(D_G)$ be unmixed. Then $I(G)$ is unmixed. Therefore, by \cite[Theorem 1 and 2]{ptv96}, every vertex of $G$ belongs to exactly one simplex of $G$, and the vertex sets of simplices form a partition of $V(G)$. Let $H_{1},\ldots,H_{m}$ be all the simplices of $D_G$. Without loss of generality, let $V(H_{1})=\{x_{1},\ldots,x_{k_1}\}$, $V(H_{2})=\{x_{k_1+1},\ldots, x_{k_2}\},\ldots, V(H_m)=\{x_{k_{m-1}+1},\ldots, x_{k_m}\}$ and $x_{k_i}$ is a simplicial vertex of the simplex $H_{i}$ for each $1\leq i\leq m$. Let $h_{i}=x_{k_{i-1}+1}+\cdots+x_{k_i}$ for each $1\leq i\leq m$, where $k_0=0$.\medskip

\noindent\textbf{Claim:} The polynomials $h_{1},\ldots,h_{m}$ forms a regular sequence on $R/I(D_{G})$.
\medskip

\noindent\textit{Proof of the claim.} $I(D_{G})$ is a monomial ideal, and so, every associated prime of $I(D_G)$ is generated by a set of 
variables. Since $I(D_G)$ is unmixed, all the associated primes of $I(D_G)$ are minimal, and they are the associated primes of the ideal $I(G)$, the radical of $I(D_G)$. Thus, by Remark \ref{remmvc}, the associated primes of $I(D_G)$ are generated by the minimal vertex covers of $G$. Note that no minimal vertex cover of $G$ contain the set $V(H_{1})$ as $\mathcal{N}_{G}[x_{k_{1}}]=V(H_{1})$. Therefore, $h_{1}$ does not belong to any associated prime of $I(D_G)$. Hence, $h_1$ is a regular element on $R/I(D_G)$. Now, we will show that $h_{i}$ is regular on $R/\big<I(D_G), h_1,\ldots,h_{i-1}\big>$. Let $I(D_G)=\bigcap_{C\in\mathcal{C}_{s}} Q_{C}$ be the primary decomposition of $I(D_G)$ as described in Theorem \ref{prmdc}. Let $P$ be an associated prime of the ideal $\big<I(D_{G}),h_1,\ldots,h_{i-1}\big>$. Then $I(D_G)\subseteq P$ implies $Q_{C}\subseteq P$ for some $C\in \mathcal{C}_{s}$ and hence, $P_{C}\subseteq P$. Since $H_{j}$ is a simplex for each $1\leq j\leq m$, at least $\vert V(H_{j})\vert-1$ elements of $V(H_{j})$ belong to $P$. Therefore, for each $1\leq j\leq i-1$, we have $V(H_{j})\subseteq P$ as $h_1,\ldots,h_{i-1}\in P$. Now it is clear that $P_{C}+\big<\bigcup_{j=1}^{i-1}V(H_j)\big>\subseteq P$. Suppose $P_{C}+\big<\bigcup_{j=1}^{i-1}V(H_j)\big>\subsetneq P$. Then $P$ has an element $g$ such that $g\not\in P_{C}+\big<\bigcup_{j=1}^{i-1}V(H_j)\big>$. Then we can choose $g$ from the polynomial ring $K[V(G)\setminus C\cup V(H_1)\cup\cdots\cup V(H_{i-1})]$. Since $P$ is an associated prime ideal of $\big<I(D_G),h_1,\ldots,h_{i-1}\big>$ and $h_1,\ldots,h_{i-1}$ are linear forms, we can choose a homogeneous polynomial $f$ such that $\big<I(D_G),h_1,\ldots,h_{i-1}\big>:f=P$ and $g\in I(D_G):f$. Then $I(D_G)$ will have an associated prime containing $\big<P_C,g\big>$, which is a contradiction as $I(D_G)$ being unmixed has no embedded prime. Hence, any associated prime of $\big<I(D_G),h_1,\ldots,h_{i-1}\big>$ is of the form $P_C+\big<\bigcup_{j=1}^{i-1}V(H_j)\big>$ for some $C\in\mathcal{C}_{s}$. Now from this observation, it is clear that the element $h_{i}$ does not belong to $P$, otherwise $V(H_{i})\subseteq P$, which is a contradiction as $V(H_{i})\cap V(H_{j})=\emptyset$ for all $1\leq j\leq i-1$ and $C$, being the minimal cover of $D_G$, can not contain the set $V(H_{i})$. We have chosen $C\in \mathcal{C}_{s}$ in an arbitrary fashion and thus, $h_{i}$ can not belong to any associated prime of $\big<I(D_{G}),h_1,\ldots,h_{i-1}\big>$. Hence, $h_{i}$ is a regular element on $R/\big<I(D_{G}),h_1,\ldots,h_{i-1}\big>$. By induction hypothesis, we get $h_{1},\ldots,h_{m}$ forms a regular sequence on $R/I(D_{G})$. Therefore, $\mathrm{depth}(R/I(D_{G}))\geq m$. Now $I(D_{G})$ is unmixed, which implies any associated prime of $I(D_{G})$ is generated by some minimal vertex cover of $G$. Since $V(H_{1}),\ldots, V(H_{m})$ forms a partition of $V(D_{G})$, each minimal vertex cover of $G$ contain exactly $\sum_{i=1}^{m}(\vert V(H_i)\vert -1)= \vert V(D_{G})\vert-m$ vertices. Thus, $\mathrm{ht}(I(D_{G}))=\vert V(D_{G})\vert -m$ and $\mathrm{dim}(R/I(D_G))=\vert V(D_{G})\vert -\mathrm{ht}(I(D_G))=m$. It is well-known that $\mathrm{depth}(R/I(D_G))\leq \mathrm{dim}(R/I(D_G))$, which gives $\mathrm{depth}(R/I(D_G))= \mathrm{dim}(R/I(D_G))=m$. Hence $R/I(D_G)$ is Cohen-Macaulay.
\end{proof}

\begin{corollary}
Let $D_G$ be a weighted oriented graph such that $G$ is chordal or simplicial. Then $R/I(D_G)$ is Gorenstein if and only if $D_G$ is a disjoint union of edges.
\end{corollary}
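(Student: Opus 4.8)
The plan is to prove both implications, freely using Theorem~\ref{thmchordalcm} to pass between the Cohen--Macaulay and unmixed hypotheses, together with the standard facts that a graded quotient of $R$ is Gorenstein if and only if it is Cohen--Macaulay of type one, and that Gorensteinness is preserved in both directions under passage to a quotient by a regular sequence. (I will assume, as is customary, that $G$ has no isolated vertices; otherwise one only obtains the statement up to isolated vertices, since tensoring with a polynomial ring preserves Gorensteinness.)

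For the \emph{if} direction, suppose $D_G$ is a disjoint union of edges, with oriented edges $(u_i,v_i)$ and weights $w(v_i)=a_i$, $1\le i\le m$, on pairwise disjoint vertex pairs. Then
$$I(D_G)=\big<x_{u_1}x_{v_1}^{a_1},\ldots,x_{u_m}x_{v_m}^{a_m}\big>$$
is generated by $m$ monomials with pairwise disjoint supports, and such monomials always form a regular sequence. Hence $I(D_G)$ is a complete intersection, so $R/I(D_G)$ is Gorenstein.

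For the \emph{only if} direction, assume $R/I(D_G)$ is Gorenstein. It is then Cohen--Macaulay, hence unmixed by Theorem~\ref{thmchordalcm}, which places us exactly in the setting of that proof: the simplices $H_1,\ldots,H_m$ partition $V(D_G)$, and $h_1,\ldots,h_m$ with $h_i=\sum_{x\in V(H_i)}x$ form a regular sequence on $R/I(D_G)$. Since Gorensteinness descends to and ascends from the quotient by this regular sequence, the Artinian ring $\bar A:=R/\big<I(D_G),h_1,\ldots,h_m\big>$ is Gorenstein, equivalently $\dim_K\mathrm{soc}(\bar A)=1$. The goal is therefore to show that $\dim_K\mathrm{soc}(\bar A)\ge 2$ whenever $D_G$ is \emph{not} a disjoint union of edges, i.e. whenever some simplex satisfies $\abs{V(H_i)}\ge 3$ or some edge of $G$ joins two distinct simplices.

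In each of these two cases I would exhibit two $K$-linearly independent socle classes. The prototype is a single weighted oriented triangle: eliminating one variable via $h_1=0$ and reducing modulo the two edge monomials collapses $\bar A$ to a monomial algebra of the shape $K[x,y]/\big<xy,x^{a+1},y^{a+1}\big>$, whose socle $\big<x^{a},y^{a}\big>$ is two--dimensional; the path $P_4$ reduces similarly and produces a two--dimensional socle coming from its cross edge. The main obstacle is to carry these local computations through globally: one must verify that the two socle elements produced inside a large simplex, or across a cross edge, remain nonzero and linearly independent in $\mathrm{soc}(\bar A)$ in the presence of arbitrary weights, orientations, and further cross edges. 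To control this I would invoke the explicit classification of unmixed chordal and simplicial weighted oriented graphs in \cite{cpr22}, which pins down the admissible local structure of each simplex together with its incident cross edges, thereby reducing the global socle computation to the monomial prototypes above. As an independent check one can compute the $h$-vector of $R/I(D_G)$ and observe that it fails to be palindromic in precisely these cases (for the triangle it is $(1,2,\ldots,2)$), contradicting the symmetry forced by the Gorenstein property.
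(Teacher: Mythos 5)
Your ``if'' direction is correct and coincides with the paper's: the generators of $I(D_G)$ have pairwise disjoint supports, so it is a complete intersection. The ``only if'' direction, however, contains a genuine gap, located exactly where you flag ``the main obstacle'': you reduce to showing $\dim_K\mathrm{soc}(\bar A)\geq 2$ whenever some simplex satisfies $\abs{V(H_i)}\geq 3$ or some edge joins two distinct simplices, but you never establish this. You verify two monomial prototypes (a triangle and $P_4$) and then defer the general case---arbitrary weights, orientations, and cross-edge configurations---to an unexecuted case analysis via the classification in \cite{cpr22}. That deferred step is the entire content of the implication, not a routine check: even the triangle prototype is only sketched (after eliminating $z=-x-y$, the images of the three weighted edge monomials do not visibly generate an ideal of the shape $\big<xy,x^{a+1},y^{a+1}\big>$ for every admissible weight and orientation pattern, and passing to such an initial/monomial reduction can change the socle dimension, so this needs justification), and the ``independent check'' that the $h$-vector fails to be palindromic ``in precisely these cases'' is likewise asserted without proof. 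As written, the argument disproves Gorensteinness in two examples and conjectures the mechanism in general.

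The paper sidesteps all of this with a short reduction you missed: since $I(D_G)$ is a monomial ideal with radical $I(G)$, Gorensteinness of $R/I(D_G)$ forces Gorensteinness of $R/I(G)$ by \cite[Theorem 2.6]{htt05}. This collapses the weighted oriented problem to the squarefree one, where \cite[Corollary 2.1]{cmchordal} applies---and, as the paper notes, its argument carries over to simplicial graphs because unmixedness again yields the partition of $V(G)$ into simplices via \cite{ptv96}---giving that $R/I(G)$ is Gorenstein if and only if $G$ is a disjoint union of edges, whence so is $D_G$. Your parenthetical about isolated vertices is a fair observation (the statement implicitly assumes $G$ has no isolated vertices), but it does not repair the missing step: to salvage your socle approach you would have to actually carry out the structure analysis from \cite{cpr22} case by case, and the radical reduction makes that unnecessary.
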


\begin{proof}
Let $G$ be a chordal or simplicial graph such that $I(G)$ is unmixed. Then by \cite[Theorem 1 and 2]{ptv96}, each vertex of $G$ belongs to exactly one simplex of $G$, and the vertex sets of simplices form a partition of $V(G)$. Therefore, the same argument given for chordal graphs in \cite[Corollary 2.1]{cmchordal} is also applicable to simplicial graphs. Hence, it follows from \cite[Corollary 2.1]{cmchordal} that $R/I(G)$ is Gorenstein if and only if $G$ is a disjoint union of edges. Now $I(D_G)$ being a monomial ideal, by \cite[Theorem 2.6]{htt05}, $R/I(D_G)$ is Gorenstein implies $R/I(G)$ is Gorenstein. Thus, $D_G$ is a disjoint union of edges.\par 

Conversely, if $D_G$ is a disjoint union of edges, then $I(D_G)$ is complete intersection, and hence, $R/I(D_G)$ is Gorenstein.
\end{proof}

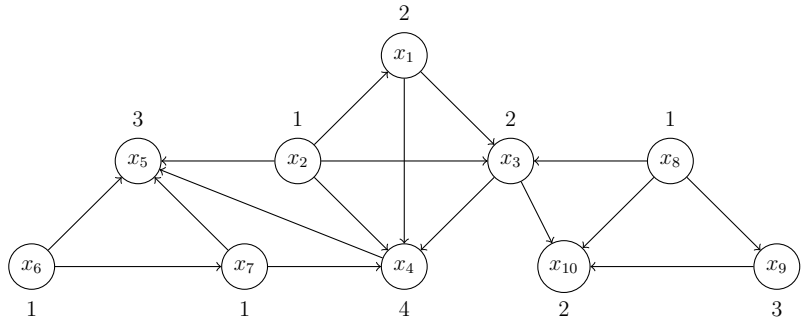
\begin{figure}[H]
\begin{center}
\begin{tikzpicture}
   [scale=1.4,auto=left,every node/.style={circle,scale=0.7}]
 
  \node[draw] (n1) at (0,2)  {$x_{1}$};
  \node[draw] (n2) at (-1,1)  {$x_{2}$};
  \node[draw] (n3) at (1,1) {$x_{3}$};
   \node[draw] (n4) at (0,0) {$x_{4}$};
   \node[draw] (n5) at (-2.5,1) {$x_{5}$};
 \node[draw] (n6) at (-3.5,0) {$x_{6}$};
 \node[draw] (n7) at (-1.5,0) {$x_{7}$};
 \node[draw] (n8) at (2.5,1) {$x_{8}$};
 \node[draw] (n9) at (3.5,0) {$x_{9}$};
 \node[draw] (n10) at (1.5,0) {$x_{10}$};
 \node (m1) at (0,2.4){$2$};
 \node (m2) at (-1,1.4){$1$};
 \node (m3) at (1,1.4) {$2$};
 \node (m4) at (0,-0.4) {$4$};
 \node (m5) at (-2.5,1.4) {$3$};
 \node (m6) at (-3.5,-0.4) {$1$};
 \node (m7) at (-1.5,-0.4) {$1$};
 \node (m8) at (2.5,1.4){$1$};
 \node (m9) at (3.5,-0.4){$3$};
 \node (m10) at (1.5,-0.4){$2$};
 
  \foreach \from/\to in {n2/n1, n1/n3, n1/n4, n2/n3, n2/n4, n3/n4, n6/n5, n7/n5, n6/n7, n8/n9, n9/n10, n8/n10, n2/n5, n8/n3, n7/n4, n4/n5, n3/n10}
    \draw[->] (\from) -- (\to); 
\end{tikzpicture}
\end{center} 
\caption{A Cohen-Macaulay weighted oriented chordal graph $D_G$.}\label{fig1}
\end{figure}

\begin{example}{\rm
Consider the weighted oriented chordal graph $D_G$ in Figure \ref{fig1}. Note that the induced subgraphs $G[\{x_1,x_2,x_3,x_4\}]$, $G[\{x_5,x_6,x_7\}]$ and $G[\{x_8,x_9,x_{10}\}]$ are simplices of $G$ and each vertex of $G$ belongs to exactly one of these simplices. Then by \cite[Theorem 2]{ptv96}, $I(G)$ is unmixed. Let $C$ be a strong vertex cover of $D_G$. If $\{x_1,x_2,x_3,x_4\}\subseteq C$, then $x_1\in\mathcal{L}_{3}(C)$, but, there is no vertex $x_i$ with $w(x_i)>1$ and $(x_i,x_1)\in E(D_G)$. This gives a contradiction to the definition of strong vertex cover. Therefore, $\{x_1,x_2,x_3,x_4\}\not\subseteq C$ and similarly, $\{x_5,x_6,x_7\}, \{x_8,x_9,x_{10}\}\not\subseteq C$. Hence, $\mathcal{L}_{3}(C)=\emptyset$, and this is true for any strong vertex cover of $D_G$. Thus, by Theorem \ref{unm}, $I(D_G)$ is unmixed and from Theorem \ref{thmchordalcm}, we get $R/I(D_G)$ is Cohen-Macaulay.
}
\end{example}

\section*{Acknowledgment}
The author would like to thank the National Board for Higher Mathematics (India) for the financial support through NBHM Postdoctoral Fellowship. Also, the author is thankful to Prof. Tran Nam Trung for his valuable comment.

\printbibliography

\end{document}